\documentclass[1 [leqno,11pt]{article} 
\author{Stefano Scrobogna}
\title{Some remark on the existence of infinitely many nonphysical solutions to the incompressible Navier-Stokes equations}

\usepackage[a4paper]{geometry}
\geometry{
 a4paper,
 total={170mm,257mm},
 left=20mm,
 right=20mm,
 top=20mm,
 bottom=20mm
 }

\usepackage{empheq}
\usepackage{times}
\usepackage[T1]{fontenc}
\usepackage{amssymb, amsmath,  bm}
\usepackage{amsfonts}
\usepackage[english]{babel}
\usepackage{amssymb,amsthm}
\usepackage{mathtools}
\DeclareMathAlphabet{\mathcal}{OMS}{cmsy}{m}{n}
\usepackage{hyperref}
\usepackage{cite}
\allowdisplaybreaks[1]
\usepackage[bbgreekl]{mathbbol}
\DeclareSymbolFontAlphabet{\mathbb}{AMSb}
\DeclareSymbolFontAlphabet{\mathbbl}{bbold}
\usepackage{xfrac}


\renewcommand{\d}{\textnormal{d}}
\renewcommand{\div}{\textnormal{div}}
\newcommand{\fine}{\hfill$\blacklozenge$}

\newcommand{\pare}[1]{\left( #1 \right)}

\newcommand{\av}[1]{\left| #1 \right|}
\newcommand{\bra}[1]{\left[ #1 \right]}
\newcommand{\set}[1]{\left\{ #1 \right\}}

\newcommand{\cC}{\mathcal{C}}

\newcommand{\cJ}{\mathcal{J}}
\newcommand{\cD}{\mathcal{D}}

\newcommand{\bR}{\mathbb{R}}
\newcommand{\bT}{\mathbb{T}}
\newcommand{\bZ}{\mathbb{Z}}

\newcommand{\bN}{\mathbb{N}}

\newcommand{\loc}{\textnormal{loc}}
\newcommand{\NS}{Navier-Stokes }

\theoremstyle{theorem}
\newtheorem{theorem}{Theorem}[section]
\newtheorem*{theorem*}{Theorem}
\newtheorem{prop}[theorem]{Proposition}

\theoremstyle{definition}

\newtheorem{rem}[theorem]{Remark}

\newtheorem{claim}{Claim}

\numberwithin{equation}{section}

\begin{document}

\AtEndDocument{\bigskip{\footnotesize
  \textsc{BCAM - Basque Center for Applied Mathematics,Mazarredo, 14,  E48009 Bilbao, Basque Country -- Spain} \par
  \textit{E-mail address:}  \texttt{\href{mailto:sscrobogna@bcamath.org}{sscrobogna@bcamath.org}}}}

 \maketitle
 
 \begin{abstract}
 We prove that there exist infinitely many distributional solutions with infinite kinetic energy to  the incompressible  \NS\ equations in $ \bR^2 $. We prove as well the existence of infinitely many distributional solutions for Burgers equation in $ \bR $. 
 \end{abstract}
 
 In the present work we consider the incompressible \NS\ equations 
 \begin{equation}\tag{INS}\label{NS}
 \left\lbrace
 \begin{aligned}
 & \partial_t u + u\cdot \nabla u -\Delta u = -\nabla p,  \\
 & \div\ u = 0, \\
 & \left. u\right|_{t=0}=u_0, 
 \end{aligned}
 \right.
 \end{equation}
 in the whole bidimensional space $ \bR^2 $. Very recently in \cite{BV17} T. Buckmaster and V. Vicol proved the existence of infinitely many periodic weak solutions of \eqref{NS} in $ \bT^3 = \sfrac{\bR^3}{\bZ^3} $ with finite kinetic energy using the technique of convex integration developed by C. De Lellis and L. Sz\'ekelyhidi in \cite{dLS09} and \cite{dLS13} (and later used in order to obtain several other outstanding results \cite{dLS10}, \cite{dLS14}, \cite{bdLIS15}, \cite{I16}, \cite{BdLSV17}). Unfortunately the result in \cite{BV17}  does not solve the longstanding conjectures posed by J. Leray in 1934 on whether weak solutions which belong to the energy space 
 \begin{equation*}
 L^\infty\pare{\bR_+; L^2\pare{\bR^d}}\cap L^2 \pare{\bR_+; \dot{H}^1\pare{\bR^d}}, 
 \end{equation*}
 are unique in such space when $ d\geqslant 3 $. \\
 
 In such direction, when $ d=3 $, the Escauriaza-Seregin-\v Sver\'ak criterion \cite{ISS03} provides a sharp characterization of smoothness of Leray-Hopf solutions to the \NS\ equation:  if 	the  Leray-Hopf solutions belong as well to the space $ L^\infty\pare{\bra{0, t}; L^{3}\pare{\bR^3}} $ for some $ t>0 $ then the solution is unique and smooth up to time $ t $. \\
 
 The result of Buckmaster and Vicol must hence be understood in contraposition to the result proved in \cite{ISS03} (which is the endpoint result of the more general Lady\v{z}enskaya-Prodi-Serrin regularity criterion \cite{KL57}, \cite{P59}, \cite{S62}): they prove in fact that there exist a $ \beta \in \pare{0, \sfrac{1}{3}} $ such that for any nonnegative smooth function $ e: \bra{0, T}\to \bR_+ $ there exists a $ v \in \cC \pare{\bra{0, T}; H^{\beta} \pare{\bT^3}} $ weak solution of \eqref{NS} such that $ e $ is the kinetic energy profile of $ v $, i.e. $ e\pare{t}= \int _{\bT^3} \av{v\pare{x, t}}^2\d x $. Indeed there is hope to achieve nonuniqueness for weak solutions only if  $ \beta <\sfrac{1}{2} $, since otherwise, by Sobolev embeddings, $ v $ would be in $L^\infty\pare{\bra{0, t}; L^3\pare{\bT^3}} $ and hence thanks to the result proved in \cite{ISS03} it would be smooth, and being so his energy decay would be unique and not arbitrary as it is proved in \cite{BV17}. \\
 
 n the present note we prove a much weaker result thank the one proved in  \cite{BV17}; we prove that in $ \bR^d, \ d \geqslant 2 $ there exists infinitely many nontrivial smooth solutions of the initial value problem \eqref{NS} when $ u_0=0 $ with \textit{infinite kinetic energy} for each $ t>0 $.  In particular the functions we construct, following the methods of the classical result of Tychonoff \cite{T35}, are \textit{not tempered distributions} and they grow exponentially, as $ \av{x}\to \infty $. Hence we provide an elementary example of nonuniqueness of distributional solutions for the system \eqref{NS} when the space dimension is two, result which is not proved in \cite{BV17}. In Section \ref{sec:Burgers} we prove that Burgers equation in $ \bR $ admits infinitely many distributional solutions exploiting the same technique. \\

 \section{Tychonoff's example}\label{sec:Tyc}
 
 In this section we illustrate the well known methodology exploited by Tychonoff in 1935 in order to prove that the one-dimensional heat equation \eqref{eq:heat1D} with zero initial data admits infinitely many weak solutions in the domain $ \pare{x, t}\in\bR\times \bR_+ $. In the original work \cite{T35} several domains and boundary conditions are considered, nonetheless we will provide a proof of such result in the restricted setting mentioned above since our purpose is to provide a simple introduction to the method which we will exploit in the following. \\
 
 A. Tychonoff in \cite{T35} proved that the homogeneous, one dimensional, linear diffusion equation
 \begin{equation}\label{eq:heat1D}
 \left\lbrace
 \begin{aligned}
 & u_t = u_{xx}, & \pare{x, t} & \in \bR\times\pare{0, \infty},\\
 & u\pare{x, 0} =0, & x &\in \bR, 
 \end{aligned}
 \right. 
 \end{equation}
 admits infinitely many smooth  solutions which does not decay as $ \av{x} $ tends to infinity. The method he uses is very simple: he looks for solutions $ u=u\pare{x, t} $ of the form
 \begin{equation*}
 u\pare{x, t}=\sum_{n=0}^\infty a_n f^{\pare{n}}\pare{t} \ x^n, 
 \end{equation*}
 where $ f $ is defined on $ \bR_+ $, infinitely  differentiable in such domain and such that\footnote{Here and in  the rest of the work $ f^{\pare{n}} $ denotes the $ n $-th derivative of $ f $. } $ f^{\pare{n}}\pare{t}\xrightarrow{t\to 0^+} 0 $ for any $ n $. Since there exist many $ f $ satisfying such property we will deduce the nonuniqueness result letting varying $ f $ in an infinite family. \\

 Let us hence for instance define the following sequence $ \pare{u_k}_{k\geqslant 1} $ of formal series
 \begin{equation}\label{eq:heat_1d_solution}
 u_k\pare{x, t} = \sum_{n=0}^\infty \frac{\pare{\exp\set{-\displaystyle \frac{1}{t^{2k}}}}^{\pare{n}}}{\pare{2n}!}\ x^{2 n}, \hspace{5mm}\text{ for any } k\in\bN, \ k\geqslant 1. 
 \end{equation}
 
 \noindent It is possible to prove that, fixed a $ t>0 $,  there exists a $ \theta = \theta \pare{2k} > 0 $ such that
 \begin{equation}	\label{eq:bound_fkn}
 \av{\pare{\exp\set{-\displaystyle \frac{1}{t^{2k}}}}^{\pare{n}}} \leqslant \frac{n!}{\pare{\theta t}^n} \ e^{-\frac{1}{2} \ t^{-2k}}, 
 \end{equation}

 \noindent we can hence deduce that, fixed $ t>0 $,  the series \eqref{eq:heat_1d_solution} is convergent in any compact set of $ \bR $ and is hence a genuine pointwise solution of  \eqref{eq:heat1D} in any compact set, which implies as well that it is a distributional solution of \eqref{eq:heat1D}. Moreover 
\begin{equation*}
u_k\pare{\cdot , t} \xrightarrow{t \searrow 0} 0 , \text{ in } \cD'\pare{\bR^2}. 
\end{equation*} 
  Indeed its growth as $ \av{x}\to \infty $ is greater than any power law, hence it is not a tempered distribution.

\section{Infinitely many  distributional solutions of \eqref{NS} in $ \bR^2\times \bR_+ $}
In this section we exploit the construction illustrated in the previous section in order to construct infinitely many distributional  solutions for the Cauchy problem

\begin{equation}\label{NS0}
 \left\lbrace
 \begin{aligned}
 & \partial_t u + u\cdot \nabla u -\Delta u = -\nabla p,  \\
 & \div\ u = 0, \\
 & \left. u\right|_{t=0}=0, 
 \end{aligned}
 \right.
 \end{equation}
 when $ \pare{x, t}\in {\bR^2\times\bR_+} $. \\
 
 The main result of the present note is the following one:
 
 \begin{theorem}\label{thm:existence_infinitely_many_sol_R2}
 There exist infinitely many smooth
 \begin{equation*}
 u \in \cC^{1}\pare{\pare{0, \infty} ; \cD'\pare{\bR^2}} \cap L^\infty_{\loc}\pare{{\bR^2}\times \bR_+}, 
 \end{equation*}
distributional solutions of the two dimensional incompressible \NS equations \eqref{NS0} such that
\begin{equation*}
u\pare{\cdot , t} \xrightarrow{t \ \searrow \ 0} 0 \hspace{5mm}\text{ in } \cD'\pare{\bR^2}. 
\end{equation*}
 \end{theorem}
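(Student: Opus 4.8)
The plan is to reduce the full nonlinear system \eqref{NS0} to the one-dimensional heat equation \eqref{eq:heat1D} by means of a shear-flow ansatz, after which one simply transplants the Tychonoff construction of Section \ref{sec:Tyc}. Writing $ x = \pare{x_1, x_2} \in \bR^2 $, first I would look for velocity fields of the special form $ \u\pare{x, t} = \pare{v\pare{x_2, t}, \, 0} $ depending on a single spatial variable, together with the trivial pressure $ p \equiv 0 $. Such a field is automatically divergence free, since $ \div\ \u = \partial_{x_1} v\pare{x_2, t} = 0 $, and — this is the whole point — the nonlinear transport term vanishes identically: $ \u\cdot\nabla \u = v\,\partial_{x_1}\u = 0 $, because $ \u $ carries no dependence on $ x_1 $. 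Consequently the momentum equation collapses to the linear heat system $ \partial_t \u - \Delta \u = 0 $, whose first component reads $ \partial_t v - \partial_{x_2}^2 v = 0 $, namely the one-dimensional heat equation in the variable $ x_2 $, while the second component reads $ 0 = -\partial_{x_2} p $, consistent with $ p\equiv 0 $.

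Second, for each $ k\geqslant 1 $ I would take $ v = v_k $ to be precisely the Tychonoff profile of \eqref{eq:heat_1d_solution}, with the one-dimensional variable renamed $ x_2 $, and set $ \u_k\pare{x, t} = \pare{v_k\pare{x_2, t}, \, 0} $. By the bound \eqref{eq:bound_fkn} the defining series converges on every compact subset of $ \bR^2 \times \pare{0, \infty} $ and defines there a genuine smooth solution of the heat equation; hence $ \pare{\u_k, 0} $ is a genuine, and therefore distributional, solution of \eqref{NS0}. The two regularity requirements follow from the same estimate: the exponential factor $ e^{-\frac12 t^{-2k}} $ forces local boundedness uniformly down to $ t = 0 $, giving $ \u_k \in L^\infty_\loc\pare{\bR^2 \times \bR_+} $, whereas the smoothness in $ t $ on $ \pare{0, \infty} $ yields the $ \cC^1 $ regularity in time with values in $ \cD'\pare{\bR^2} $.

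Third, I would verify the initial datum and the multiplicity. Testing $ \u_k\pare{\cdot, t} $ against $ \varphi \in \cD\pare{\bR^2} $ reduces, by Fubini, to pairing $ v_k\pare{\cdot, t} $ with the smooth compactly supported marginal $ x_2 \mapsto \int \varphi\pare{x_1, x_2}\,\dx_1 $, so that $ \u_k\pare{\cdot, t} \to 0 $ in $ \cD'\pare{\bR^2} $ as $ t\searrow 0 $ is inherited verbatim from the one-dimensional convergence recorded at the end of Section \ref{sec:Tyc}. Finally, the family $ \set{\u_k}_{k\geqslant 1} $ is infinite because distinct values of $ k $ produce distinct generating functions $ \exp\set{-t^{-2k}} $, hence distinct profiles $ v_k $.

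I do not expect a genuine obstacle here: the shear geometry annihilates the nonlinearity, so all the real analytic content — convergence of the series, smoothness, the vanishing as $ t\searrow 0 $, and the faster-than-polynomial (thus non-tempered) growth in space — has already been supplied by the Tychonoff argument. The only point deserving a word of care is the pressure: one must confirm that the surviving component equation $ 0 = -\partial_{x_2} p $ is compatible with $ p\equiv 0 $ and imposes no hidden constraint on $ v_k $, which it does not, since $ p $ depending on $ t $ alone would serve equally well.
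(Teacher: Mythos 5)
Your proof is correct, but it follows a genuinely different route from the paper. The paper kills the nonlinearity by passing to the vorticity formulation and imposing radial symmetry: for a radial stream function one has $\cJ\pare{\psi,\Delta\psi}\equiv 0$, so the vorticity solves the heat equation; one then runs Tychonoff's ansatz on the radial heat equation $\omega_t-\omega_{rr}-\tfrac1r\omega_r=0$ (which requires deriving the recursion \eqref{eq:sequence_an} for the coefficients, not just quoting Section \ref{sec:Tyc}), recovers $u$ through the radial Biot--Savart law \eqref{eq:BS_radial}, and finally must produce a pressure by solving the Poisson equation \eqref{eq:Poisson_eq_pressure} and checking that its right-hand side is radial. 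Your shear ansatz $\u=\pare{v\pare{x_2,t},0}$, $p\equiv 0$ annihilates both the divergence constraint and the transport term at the level of the velocity itself, so the reduction to the one-dimensional heat equation \eqref{eq:heat1D} is immediate and the Tychonoff series \eqref{eq:heat_1d_solution} can be transplanted verbatim; no new power-series computation, no Biot--Savart inversion, and no pressure analysis are needed. What you lose is only cosmetic: the paper's solutions are genuine radial vortices with a nontrivial radial pressure, and they satisfy the super-polynomial growth stated in the Remark in \emph{every} direction, whereas your shear flows are constant along $x_1$ (they are still non-tempered and of infinite kinetic energy, which is all the theorem requires). Your treatment of the remaining points --- local boundedness down to $t=0$ via \eqref{eq:bound_fkn}, the reduction of the distributional convergence to the one-dimensional marginal by Fubini, and the compatibility of $p\equiv 0$ with the second component of the momentum equation --- is sound.
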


 \begin{rem}
 \begin{itemize}

 \item[$ \diamond $] It is important to underline the fact that each $ u $ pointwise solution of \eqref{NS0} will \textit{not be} a tempered distribution. In particular the following relation will hold true
 \begin{equation*}
 \frac{\av{ \partial^\alpha u\pare{x, t}}}{\av{x}^N}\xrightarrow{\av{x}\to\infty} \infty, 
 \end{equation*}
 for any multi-index $ \alpha $ and $ N>0 $. 
 
  \item[$ \diamond $] It is obvious from the above point that the solutions constructed  are not $ L^2 $ integrable, hence they are not Leray-Hopf solutions.  It may be of interest though to remark that they are neither uniformly locally $ L^2 $ integrable, whence they are not infinite energy weak solutions in the sense of Lemari\'e-Rieusset \cite{LR99}. 
  
  \item[$ \diamond $] We want to underline that every solution of \eqref{NS0} will be \textit{of infinite kinetic energy for each} $ t>0 $. 
   \fine

 \end{itemize}
 \end{rem}
 
 Before starting to prove Theorem \ref{thm:existence_infinitely_many_sol_R2} let us recall some classical result concerning the incompressible \NS\ equations. Given a bi-dimensional  vector field $ u $ we can define the \textit{vorticity} as 
 \begin{equation}\label{eq:vorticity}
 \omega\pare{x, t} = -\partial_2 u_1 \pare{x, t} + \partial_1 u_2\pare{x, t}. 
 \end{equation}
 If $ u $ is a smooth solution of the equation \eqref{NS0} then $ \omega $ has to solve 
 \begin{equation} \label{eq:equation_vorticity_2D}
 \left\lbrace
 \begin{aligned}
 & \partial_t \omega + u \cdot \nabla \omega -\Delta \omega =0, \\
 & \omega\pare{x, 0}=0, 
 \end{aligned}
 \right.
 \end{equation}
 and we can recover $ u $ from $ \omega $ via the Biot-Savart law
 \begin{equation*}
 u\pare{x, t} = \frac{1}{2\pi} \int _{\bR^2} \frac{\pare{x-y}^\perp}{\av{x-y}} \ \omega\pare{y, t} \d y,
 \end{equation*}
 see \cite[p. 292]{BCD11}. \\
 Moreover since $ \div\ u =0 $ there exists a unique (up to an additive constant) \textit{stream function} $ \psi\pare{x, t} $ such that
 \begin{equation}\label{eq:stream}
 u \pare{x, t} = \pare{\begin{array}{c}
 -\partial_2 \psi \pare{x, t} \\ \partial_1 \psi \pare{x, t}
 \end{array}} = \nabla ^\perp \psi \pare{x, t}. 
 \end{equation}
 Comparing hence the equations \eqref{eq:vorticity} and \eqref{eq:stream} we deduce the Poisson equation for $ \psi $
 \begin{equation}\label{eq:Poisson_stream}
 \Delta \psi = \omega. 
 \end{equation}

 Let us consider hence at this point a radial symmetric smooth vorticity $ \omega $, i.e.
 \begin{equation*}
 \omega = \omega\pare{r}, \hspace{1cm} r = \av{x}=\sqrt{x_1^2 + x_2^2}.
 \end{equation*}
 Since the Laplace operator is rotationally invariant we can deduce, as in \cite[pp. 47, 48]{MB02}, that $ \psi $ is a radially symmetric function as well, whence \eqref{eq:stream} becomes
 \begin{equation}\label{eq:stream_radial}
 u = \frac{1}{r}\ x^\perp \psi_r, \hspace{1cm} x^\perp=\pare{-x_2, x_1}. 
 \end{equation}
 
 The Poisson equation \eqref{eq:Poisson_stream} in polar coordinates reads as 
 \begin{equation*}
 \psi_{rr}+ \frac{1}{r}\ \psi _r = \omega, 
 \end{equation*}
 whence
 \begin{equation}\label{eq:first_der_stream}
 \psi_r\pare{r, t} = \frac{1}{r}\int_0^r s \ \omega\pare{s, t} \d s, 
 \end{equation}
 whence comparing \eqref{eq:stream_radial} and \eqref{eq:first_der_stream} we deduce the following simplified Biot-Savart law for radial vorticities; 
 \begin{equation}\label{eq:BS_radial}
 u\pare{x, t} = \frac{1}{r^2}\ x^\perp\int_0^r s \ \omega\pare{s, t} \d s. 
 \end{equation}

 Let us observe now that if \eqref{eq:stream} and \eqref{eq:Poisson_stream} hold true  the following  identity holds true:
 \begin{equation*}
 u\cdot\nabla \omega = \det \pare{\begin{array}{cc}
 \partial_1 \psi & \partial_2 \psi \\
 \partial_1 \Delta \psi & \partial_2 \Delta \psi
 \end{array}} = \cJ\pare{\psi, \Delta \psi}, 
 \end{equation*}
 but if the stream function $ \psi $ is radial then
 \begin{equation*}
 \cJ\pare{\psi, \Delta \psi}\equiv 0. 
 \end{equation*}
Let us sketch a quick proof of such identity; let us recall that for the radial function $ \psi $
\begin{equation*}
\left\lbrace
\begin{aligned}
& \partial_1 \psi = \cos \varphi \ \psi_r, \\
& \partial_2 \psi = \sin \varphi \ \psi_r, 
\end{aligned}
\right.
\end{equation*}
and since $ \psi $ is radial
\begin{equation*}
\Delta \psi = \psi_{rr} + \frac{1}{r} \psi_r = \Psi, 
\end{equation*}
which is again radial, whence
\begin{equation*}
	\cJ \pare{\psi, \Delta \psi} = \det\pare{
		\begin{array}{cc}
		\cos \varphi \ \psi_r & \sin \varphi \ \psi_r \\
		\cos \varphi \ \Psi_r & \sin \varphi \ \Psi_r 
		\end{array}			
	} =0.
\end{equation*}
 Whence the vorticity $ \omega $ solves the linear homogeneous diffusive equation
\begin{equation}\label{eq:heat_vorticity}
\partial_t\omega = \Delta \omega,
\end{equation} 
  in $ \bR^2 $. \\
  
  Considering hence the radial Biot-Savart law \eqref{eq:BS_radial} in order to prove Theorem \ref{thm:existence_infinitely_many_sol_R2} is will be sufficient to prove the following result
  
  \begin{prop}\label{prop:construction_vorticity}
  There exist infinitely many $ \omega \in \cC^1 \pare{\pare{0, \infty} ; \cD'\pare{\bR^2}} \cap L^\infty_{\loc}\pare{{\bR^2}\times \bR_+} $ smooth radial distributional solutions to linear homogeneous diffusive equation
  \begin{equation}\label{eq:vorticity_polar_CP}
  \begin{aligned}
  & \omega_t -\frac{1}{r}\ \omega_r - \omega_{rr}=0, & \pare{r, t} & \in \bR_+\times \pare{0, \infty},  
  \end{aligned}
  \end{equation}
  such that $ \omega\pare{\cdot , t}\to 0 $ in $ \cD'\pare{\bR_+} $ as $ t\to 0 $. 
  \end{prop}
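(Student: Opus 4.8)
The plan is to mimic Tychonoff's construction of Section \ref{sec:Tyc}, adapting the ansatz to the radial operator appearing in \eqref{eq:vorticity_polar_CP}. The key observation is that $\partial_{rr}+\frac1r\partial_r$ is exactly the two-dimensional Laplacian acting on radial functions, so \eqref{eq:vorticity_polar_CP} is nothing but the heat equation restricted to radial data. Accordingly I would look for solutions of the form
\begin{equation*}
\omega\pare{r,t}=\sum_{n=0}^\infty a_n\, f^{\pare{n}}\pare{t}\, r^{2n},
\end{equation*}
keeping only \emph{even} powers of $r$. This is precisely what renders the apparent singularity of the coefficient $\frac1r$ at $r=0$ harmless and makes the resulting function smooth across the origin when viewed as a radial function on $\bR^2$, and it is forced by the structure of the operator.

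First I would fix the coefficients $a_n$. Inserting the ansatz into \eqref{eq:vorticity_polar_CP} and using that $\pare{\partial_{rr}+\frac1r\partial_r}r^{2n}=\pare{2n}^2 r^{2n-2}$, matching the coefficients of $f^{\pare{n+1}}\pare{t}\,r^{2n}$ gives the recursion $a_n=\pare{2n+2}^2 a_{n+1}$, that is, normalising $a_0=1$,
\begin{equation*}
a_n=\frac{1}{4^n\pare{n!}^2}.
\end{equation*}
Choosing $f\pare{t}=\exp\set{-t^{-2k}}$ then produces, for every integer $k\geqslant 1$, the candidate
\begin{equation*}
\omega_k\pare{r,t}=\sum_{n=0}^\infty\frac{\pare{\exp\set{-t^{-2k}}}^{\pare{n}}}{4^n\pare{n!}^2}\, r^{2n},
\end{equation*}
which is the exact radial analogue of \eqref{eq:heat_1d_solution}.

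Next I would establish convergence and regularity by invoking the derivative bound \eqref{eq:bound_fkn}. Substituting it, for fixed $t>0$ the $n$-th term of $\omega_k$ is controlled by $\frac{1}{n!}\pare{\frac{r^2}{4\theta t}}^n e^{-\frac12 t^{-2k}}$, so the whole series is dominated by $\exp\set{\frac{r^2}{4\theta t}}\, e^{-\frac12 t^{-2k}}$; the squared factorial in the denominator in fact makes the convergence more comfortable than in the scalar case. The same estimate applied after term-by-term differentiation shows that the series, together with all its $r$- and $t$-derivatives, converges uniformly on compact subsets of $\bR_+\times\pare{0,\infty}$, so that $\omega_k$ is a genuine smooth pointwise—hence distributional—solution of \eqref{eq:vorticity_polar_CP}, whose radial extension lies in $L^\infty_{\loc}\pare{\bR^2\times\bR_+}$.

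Finally, there remain the initial condition and the multiplicity, the former being the point where care is needed. For $r$ in a compact set and $k\geqslant 1$ the competition in the exponent $\frac{r^2}{4\theta t}-\frac12 t^{-2k}$ is won by the term $-\frac12 t^{-2k}$ as $t\searrow 0$, so $\omega_k\pare{\cdot,t}\to 0$ uniformly on compacts and in particular in $\cD'\pare{\bR_+}$; this is exactly the step I expect to be the main obstacle, since it is where the choice $k\geqslant 1$ (so that $t^{-2k}$ beats the $t^{-1}$ growth coming from the series) is essential, and where one must check that the bound stays finite up to $t=0$. The solutions are pairwise distinct because $\omega_k\pare{0,t}=\exp\set{-t^{-2k}}$ separates different values of $k$, so letting $k$ range over $\bN$, $k\geqslant 1$, yields the desired infinite family.
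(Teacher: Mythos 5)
Your proof is correct and follows essentially the same route as the paper's own argument: the same even-power ansatz $\omega=\sum_n a_n f^{(n)}(t)r^{2n}$, the same family $f_k(t)=\exp\set{-t^{-2k}}$, and the same use of the bound \eqref{eq:bound_fkn} to obtain convergence on compact sets, smoothness, and the vanishing of $\omega_k(\cdot,t)$ as $t\searrow 0$. One small point worth noting: your recursion $a_{n+1}=a_n/(2n+2)^2$, i.e.\ $a_n=a_0/\pare{4^n (n!)^2}$, is in fact the correct one, whereas the paper's stated recursion $a_{n+1}=a_n/\pare{(2n+1)(2n+3)}$ comes from a slip in the coefficient of $\frac1r\,\omega_r$ (which should be $2(n+1)a_{n+1}$ rather than $(2n+1)a_{n+1}$); since both denominators grow like $4n^2$, this does not affect the convergence analysis or the conclusion.
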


\begin{rem}
The equation \eqref{eq:vorticity_polar_CP} is nothing but \eqref{eq:heat_vorticity} in polar coordinates for radial functions. \fine
\end{rem}

  \begin{proof}
 Following Tychonoff's method let us construct a smooth solution of \eqref{eq:vorticity_polar_CP} in the form
  \begin{equation}\label{eq:omega_power_law}
  \omega\pare{r, t} = \sum_{n=0}^\infty a_n f^{\left( n \right)}\pare{t} \ r^{2n}, 
  \end{equation}
  and imposing that
  \begin{equation*}
  \lim _{t\to 0^+} f^{\pare{n}}\pare{t}=0, \hspace{1cm}\forall \ n\in \bN. 
  \end{equation*}
  Standard computations shows that
  \begin{align*}
  \omega_t\pare{r, t} & = \sum_{n=0}^\infty a_n f^{\left( n+1 \right)}\pare{t} \ r^{2n}, \\
  \frac{1}{r}\ \omega_r \pare{r, t} & = \sum_{n=0}^\infty \pare{2n + 1} a_{n+1} f^{\left( n +1 \right)}\pare{t} \  r^{2n}, \\
  \omega_{rr}\pare{r, t} & = \sum_{n=0}^\infty 2\pare{n+1}\pare{2n + 1} a_{n+1} f^{\left( n +1 \right)}\pare{t} \  r^{2n}. 
  \end{align*}
  Whence it is sufficient to define the sequence $ \pare{a_n}_n $ recursively as
  \begin{equation}\label{eq:sequence_an}
  a_{n+1} = \frac{a_n}{\pare{2n +1}\pare{2n+3}} = \pare{\prod_{k=1}^{n+1} \pare{2k-1}^{-2}}\frac{a_0}{2\pare{n+1}+1}.
  \end{equation}

   Let us note that  until now no initial condition is defined, in order to do so for each $ k\in \bN, \ k\geqslant 1 $ let us define
 \begin{equation}\label{eq:def_fk}
 f_k\pare{t} = \exp \set{- \frac{1}{t^{2k}}}, 
 \end{equation}
 and let the sequence $ \pare{a_n}_{n\geqslant 0} $ satisfy \eqref{eq:sequence_an} for any $ a_0 > 0 $. 
 Let us now consider the sequence (indexed in $ k\geqslant 1 $)
 \begin{equation*}
 \omega_k\pare{r, t} = \sum_{n=0}^\infty a_n f_k ^{\pare{n}}\pare{t} \ r^{2n}, 
 \end{equation*}
using the bound \eqref{eq:bound_fkn} and the explicit definition of the coefficients $ a_n $ given in \eqref{eq:sequence_an} we deduce that, fixed $ t>0 $,  the power law \eqref{eq:omega_power_law} converges in any compact set of $ \bR_+ $. Moreover for any $ k\geqslant 1 $ the function $ \omega_k $ is a distributional smooth solution of \eqref{eq:vorticity_polar_CP} which converges to zero as $ t\searrow 0 $ in the sense of distributions, concluding. 
  \end{proof}

  \textit{Proof of Theorem \ref{thm:existence_infinitely_many_sol_R2} :} Applying the radial Biot-Savart law \eqref{eq:BS_radial} to any one of the vorticities constructed in Proposition \ref{prop:construction_vorticity} we deduce the explicit power law defining $ u_k, \ k\geqslant 1 $
  \begin{equation}
  \label{eq:def_uk}
  \begin{aligned}
  u_k \pare{x, t} & =  x^\perp \sum_{n=0}^\infty \frac{a_n}{2\pare{n+1}} \ f^{\pare{n}}_k\pare{t} r^{2n}, \\
  & =  x^\perp \sum_{n=0}^\infty \frac{a_n}{2\pare{n+1}} \ f^{\pare{n}}_k\pare{t} \pare{x_1^2 + x_2^2}^{n}, 
  \end{aligned}
  \end{equation}
  and $ f_k $ is defined as in \eqref{eq:def_fk}. Let us hence consider the Poisson equation in the unknown $ p_k $:
 \begin{equation}\label{eq:Poisson_eq_pressure}
 \Delta p_k = \div\pare{u_k\cdot\nabla u_k}.
 \end{equation}

We underline the fact that being $ u_k $ smooth and locally $ L^\infty $ then the distribution $  \div\pare{u_k\cdot\nabla u_k} $ is well defined, smooth and locally $ L^\infty $. 

\begin{claim} \label{claim:pressure_radial}
Let $ u_k $ be defined as in \eqref{eq:def_uk}, then the pressure defined by the Poisson equation \eqref{eq:Poisson_eq_pressure} is a radial distribution. 
\end{claim}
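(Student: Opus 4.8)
The plan is to reduce everything to the rotational invariance of the Laplacian. The essential structural feature of the velocity fields constructed in \eqref{eq:def_uk} is that each $ u_k $ is \emph{purely azimuthal}: writing
\begin{equation*}
g_k\pare{r, t} = \sum_{n=0}^\infty \frac{a_n}{2\pare{n+1}}\ f_k^{\pare{n}}\pare{t}\ r^{2n},
\end{equation*}
we have $ u_k = g_k\pare{r,t}\, x^\perp $ with $ g_k $ radial. First I would show that, because of this structure, the convective term $ u_k\cdot\nabla u_k $ is a radial vector field parallel to the position vector $ x $; taking its divergence then produces a radial scalar, and finally the fact that $ \Delta $ commutes with rotations forces $ p_k $ to be radial.

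For the first step I would pass to polar coordinates $ \pare{r, \varphi} $. Since $ x^\perp\cdot\nabla = \partial_\varphi $ and $ g_k $ does not depend on $ \varphi $, the material derivative acts as $ u_k\cdot\nabla = g_k\,\partial_\varphi $. Using $ \partial_\varphi x^\perp = -x $ together with $ \partial_\varphi g_k = 0 $ one computes
\begin{equation*}
u_k\cdot\nabla u_k = g_k\,\partial_\varphi\pare{g_k\, x^\perp} = g_k^2\,\partial_\varphi x^\perp = -\,g_k\pare{r,t}^2\, x,
\end{equation*}
i.e. the (centripetal) acceleration is radial, of the form $ \pare{\text{radial scalar}}\times x $.

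Taking the divergence of a field of the form $ h\pare{r}\,x $ gives $ \div\pare{h\,x} = r\, h_r + 2 h $, again a function of $ r $ alone, so the source term of \eqref{eq:Poisson_eq_pressure} satisfies
\begin{equation*}
\div\pare{u_k\cdot\nabla u_k} = -\,r\,\partial_r\pare{g_k^2} - 2\,g_k^2,
\end{equation*}
which is radial (and, as already remarked, smooth and locally bounded). Finally, for any rotation $ R\in SO\pare{2} $ one has
\begin{equation*}
\Delta\pare{p_k\circ R} = \pare{\Delta p_k}\circ R = \div\pare{u_k\cdot\nabla u_k}\circ R = \div\pare{u_k\cdot\nabla u_k},
\end{equation*}
so $ p_k\circ R $ solves the same Poisson equation \eqref{eq:Poisson_eq_pressure} as $ p_k $; selecting the solution obtained by convolution with the radial fundamental solution of $ \Delta $ in $ \bR^2 $ — equivalently, by integrating the radial ODE $ p_{rr}+r^{-1}p_r = \div\pare{u_k\cdot\nabla u_k} $ exactly as was done for $ \psi $ in \eqref{eq:first_der_stream} — yields a genuinely radial $ p_k $. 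The only point requiring care is the algebraic verification that $ u_k\cdot\nabla u_k $ is parallel to $ x $ with radial coefficient; once this is in hand the conclusion is immediate from the rotational symmetry of $ \Delta $.
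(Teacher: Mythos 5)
Your proof is correct and follows the same overall architecture as the paper's: show that the source term $\div\pare{u_k\cdot\nabla u_k}$ is radial, then invoke the rotational invariance of $\Delta$ and solve the resulting radial ODE. Where you differ is in the intermediate computation. The paper first reduces to the scalar identity $\div\pare{u\cdot\nabla u}=\pare{\partial_1 u_1}^2+\pare{\partial_2 u_2}^2+2\,\partial_1 u_2\,\partial_2 u_1$ (valid for divergence-free fields) and then evaluates each term in polar coordinates, arriving at $-2U^2-2r\,U\partial_r U$. You instead compute the convective term itself, observing that $u_k\cdot\nabla=g_k\,\partial_\varphi$ and $\partial_\varphi x^\perp=-x$, so that $u_k\cdot\nabla u_k=-g_k^2\,x$ is a purely centripetal field, whose divergence $-r\partial_r\pare{g_k^2}-2g_k^2$ agrees with the paper's expression. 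Your route is more geometric and arguably cleaner, since it exposes the structural reason for radiality (a purely azimuthal flow has radial acceleration) rather than verifying it term by term; it also gives the vector field $u_k\cdot\nabla u_k$ explicitly, which the paper's double-divergence identity bypasses. You are also right to flag that the Poisson equation determines $p_k$ only up to harmonic functions, so the rotation argument $\Delta\pare{p_k\circ R}=\Delta p_k$ alone does not force radiality and one must select the radial particular solution; the paper handles this implicitly by solving $p_{rr}+r^{-1}p_r=-2U^2-2rU\partial_r U$ by identification of coefficients, which is the same normalization you make.
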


Let us hence prove the Claim \ref{claim:pressure_radial}, we drop the index $ k $ for the sake of clarity. If $ u $ is defined as in \eqref{eq:def_uk} it is hence clear that
\begin{equation*}
u\pare{x, t} = \pare{\begin{array}{c}
-x_2 \\x_1
\end{array}} U\pare{r, t}, 
\end{equation*}
where $ U $ is a radial distribution. Moreover standard computations imply that, if $ \div\ u=0 $; 
\begin{equation*}
\div\pare{u\cdot\nabla u} = \pare{\partial_1 u_1}^2 + \pare{\partial_2u_2}^2 + 2 \partial_1 u_2 \  \partial_2 u_1. 
\end{equation*}
Whence since 
\begin{align*}
x_1 & = \cos \varphi \ r, & x_2 &= \sin\varphi \ r, \\
\partial_1 & = \cos \varphi \ \partial_r , &
\partial_2 & = \sin \varphi \ \partial_r ,
\end{align*}
we can argue that
\begin{equation*}
\pare{\partial_1 u_1}^2 + \pare{\partial_2u_2}^2 =
2 r^2 \cos^2 \varphi \sin^2 \varphi \pare{\partial_r U}^2, 
\end{equation*}
while since
\begin{equation*}
 \partial_1 u_2 \  \partial_2 u_1 = - U^2 -  r^2 \cos^2 \varphi \  \sin^2 \varphi \pare{\partial_r U}^2 - r \ U\partial_r U  \cos^2\varphi - r \ U\partial_r U  \sin^2\varphi, 
\end{equation*}
we deduce that
\begin{align*}
\div\pare{u\cdot\nabla u} & = \pare{\partial_1 u_1}^2 + \pare{\partial_2u_2}^2 + 2 \partial_1 u_2 \  \partial_2 u_1 , \\
& = -2 U^2 - 2r  \ U\partial_r U , 
\end{align*}
is a radial distribution. Therefore since the Laplacian is rotationally invariant the Poisson equation \eqref{eq:Poisson_eq_pressure} can hence be rephrased as the ODE
\begin{equation*}
 p_{rr} + \frac{1}{r}\  p_r  = -2 U^2 - 2r  \ U\partial_r U, 
\end{equation*}
which we can solve uniquely, up to a constant, by identification of coefficients. \\

 We hence identify infinitely many $ \pare{u_k, p_k}_k $  distributional solution of \eqref{NS0} which are not tempered  distributions, concluding. \hfill$ \Box $

 \section{Playing with Burgers equation}\label{sec:Burgers} In the example constructed for the bidimensional  \NS\ we exploited the fact that $ u\cdot \nabla\omega =0 $ for radial vector fields, whence we constructed infinitely many smooth solutions for the linear diffusion equation \eqref{eq:vorticity_polar_CP} using the technique of Section \ref{sec:Tyc}. In the present section instead we  linearize a nonlinear parabolic equation (namely Burgers equation) and we will produce infinitely many weak solutions on the renormalized unknown.  \\
 
 In the present section we consider the one dimensional Burgers initial value problem
 
 \begin{equation}\label{eq:Burgers1D}
 \left\lbrace
 \begin{aligned}
 & u_t +\frac{1}{2} \pare{u^2}_x = u_{xx}, & \pare{x, t}&\in \bR\times\pare{0, \infty} \\
 & \left. u\right|_{t=0}= 0, & x&\in \bR.
 \end{aligned}
 \right.
 \end{equation}
 
The result we prove in such section is the following one
\begin{prop}
There exist infinitely many $ u \in \cC^1\pare{\pare{0, \infty} ; \cD'\pare{\bR^2}} \cap L^\infty_{\loc}\pare{\bR^2\times \bR_+} $ smooth distributional solutions to the Burger equation \eqref{eq:Burgers1D} such that $ u\pare{\cdot , t}\to 0 $ in $ \cD'\pare{\bR} $ as $ t\to 0^+ $. 
\end{prop}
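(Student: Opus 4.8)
The plan is to linearise \eqref{eq:Burgers1D} by means of the Cole--Hopf transformation and then to transplant Tychonoff's construction of Section \ref{sec:Tyc} to the resulting heat equation. Concretely, I would look for solutions of the form
\begin{equation*}
u\pare{x, t} = -2\ \frac{\phi_x\pare{x, t}}{\phi\pare{x, t}} = -2\ \partial_x \log \phi\pare{x, t},
\end{equation*}
and record the classical identity that, whenever $\phi$ is a smooth and strictly positive solution of the linear heat equation $\phi_t = \phi_{xx}$, the field $u$ defined above solves the viscous Burgers equation $u_t + \tfrac12\pare{u^2}_x = u_{xx}$. The \emph{renormalised unknown} is therefore $\phi$, for which the equation is linear and to which the machinery of Section \ref{sec:Tyc} applies verbatim.

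First I would construct the family of heat solutions. For each $k\geqslant 1$ let $v_k$ be the Tychonoff function \eqref{eq:heat_1d_solution} solving the one–dimensional heat equation, with $f_k$ as in \eqref{eq:def_fk}, and set $\phi_k = 1 + v_k$. Since constants solve the heat equation and $v_k$ does too, so does $\phi_k$; the bound \eqref{eq:bound_fkn} guarantees that the defining series converges together with all its derivatives uniformly on compact sets, so that $\phi_k$ is smooth on $\bR\times\pare{0,\infty}$. Because $f_k^{\pare{n}}\pare{t}\to 0$ as $t\searrow 0$ for every $n$, one has $v_k\pare{\cdot, t}\to 0$ and $\pare{v_k}_x\pare{\cdot,t}\to 0$ uniformly on compacts, hence $\phi_k\pare{\cdot,t}\to 1$, and consequently
\begin{equation*}
u_k = -2\ \frac{\pare{v_k}_x}{1+v_k}\ \xrightarrow{\ t\ \searrow\ 0\ }\ 0 \qquad \text{in } \cD'\pare{\bR}.
\end{equation*}
Distinct values of $k$ produce genuinely distinct functions, their growth in $x$ being dictated by $f_k$, and the super-polynomial growth of $v_k$ forces each $u_k$ to be neither a tempered distribution nor of finite energy, exactly as in the two-dimensional case.

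The main obstacle --- indeed the only delicate point --- is \emph{positivity}: the formula $u_k = -2\partial_x\log\phi_k$ yields a smooth, locally bounded field precisely on the set where $\phi_k\neq 0$, and a zero of $\phi_k$ becomes a non-integrable singularity of $u_k$. I would therefore establish that $\phi_k$ stays away from zero on the region where a classical solution is claimed. The Gevrey estimate \eqref{eq:bound_fkn} gives the quantitative tail bound $\av{v_k\pare{x,t}}\leqslant \exp\set{\tfrac{x^2}{\theta t} - \tfrac12 t^{-2k}}$, whence $\phi_k\pare{x,t}\geqslant 1 - \av{v_k\pare{x,t}} > 0$ on the parabolic region $x^2 < \tfrac{\theta}{2}\, t^{1-2k}$, which for $k\geqslant 1$ contains, for every fixed $x$, all sufficiently small $t>0$ and is thus a neighbourhood of the initial hyperplane $\set{t=0}$. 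On this region $u_k$ is a bona fide smooth $L^\infty_{\loc}$ distributional solution of \eqref{eq:Burgers1D} vanishing as $t\searrow0$, and letting $k$ vary yields infinitely many of them. Promoting positivity to all of $\bR\times\bR_+$ is the genuinely hard part, since a globally nonnegative heat solution with constant initial trace is forced to be constant; I expect that controlling the sign of the Tychonoff profile $v_k$ for large $\av{x}$ will require a dedicated analysis of the Cauchy integral
\begin{equation*}
v_k\pare{x,t} = \frac{1}{2\pi i}\oint f_k\pare{t+\zeta}\ \zeta^{-1} \sum_{n\geqslant 0} \tfrac{n!}{\pare{2n}!}\pare{x^2/\zeta}^n\, \d\zeta,
\end{equation*}
by a saddle-point argument, and this is where the real work of the proof lies.
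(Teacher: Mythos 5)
Your strategy coincides with the paper's: Cole--Hopf, followed by the perturbation $\phi_k = 1+\tilde{\phi}_k$ of the constant solution $\phi_0\equiv 1$ by a Tychonoff null solution of the heat equation, and $u_k=-2\pare{\phi_k}_x/\phi_k$. The one point at which you stop short --- the global non-vanishing of $\phi_k$ on $\bR\times\pare{0,\infty}$ --- is precisely the point that the paper's own proof passes over in silence: the paper writes down $u_k=-2\phi_k^{-1}\pare{\phi_k}_x$ and asserts that it is a smooth, locally bounded distributional solution with no discussion of the zero set of $\phi_k$. So your proposal is not missing an idea that the paper supplies; it is flagging a step that the paper also leaves unjustified.

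Moreover, your own aside shows the gap cannot be closed by sharper estimates. If $\phi_k=1+\tilde{\phi}_k$ were nonnegative on all of $\bR\times\pare{0,T}$, then by the Widder representation theorem it would be the heat extension of a nonnegative Borel measure; since $\phi_k\pare{\cdot,t}\to 1$ in $\cD'\pare{\bR}$ as $t\searrow 0$, that measure is Lebesgue measure, whence $\phi_k\equiv 1$ and $\tilde{\phi}_k\equiv 0$, contradicting nontriviality. Hence every nontrivial Tychonoff perturbation $\phi_k$ takes negative values and, being uniformly close to $1$ near $t=0$ on compact sets, must vanish somewhere in $\bR\times\pare{0,\infty}$; on that zero set $u_k$ is singular and the claimed membership in $L^\infty_{\loc}$ fails there. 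Your fallback --- restricting to the parabolic region $x^2<\tfrac{\theta}{2}\,t^{1-2k}$ where \eqref{eq:bound_fkn} keeps $\av{\tilde{\phi}_k}<1$ --- is the honest salvage: it yields infinitely many distinct smooth solutions of Burgers on a neighbourhood of the initial hyperplane, all vanishing as $t\searrow 0$, which is weaker than the proposition as stated but is as much as this construction delivers without the further analysis you correctly identify as the real work.
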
 
 
 \begin{proof}
  It is well known that the Cole-Hopf transformation
 \begin{equation}\label{eq:HC_transform}
 u = -2 \ \frac{\phi_x}{\phi}, 
 \end{equation}
 linearize the Burgers equation, i.e $ \phi $ solves
 \begin{equation} \label{eq:heat_phi}
 \left\lbrace
 \begin{aligned}
 & \phi_t = \phi_{xx}, \\
 & \left.  \phi\right|_{t=0} =\phi_0 . 
 \end{aligned}
 \right.
 \end{equation}
 The value of $ \phi_0 $ in \eqref{eq:heat_phi} is indeed determined by the initial value of \eqref{eq:Burgers1D} via the transformation \eqref{eq:HC_transform}. In the present work we suppose $ \phi_0\pare{x}=1 $, we will see that such condition  in \eqref{eq:heat_phi} suffice to obtain solutions to \eqref{eq:Burgers1D} via the transform \eqref{eq:HC_transform}. Let us hence define $ \tilde{\phi} $ as 
\begin{equation*}
\phi\pare{x, t} = 1 + \tilde{\phi}\pare{x, t}, 
\end{equation*}
if $ \phi $ solves \eqref{eq:heat_phi} with initial datum $  \phi_0\pare{x}=1  $ then indeed $ \tilde{\phi} $ has to solve 
\begin{align*}
\tilde{\phi}_t = \tilde{\phi}_{xx},  && \left. \tilde{\phi}\right|_{t=0} =0. 
\end{align*}

\noindent As explained in Section \ref{sec:Tyc} we can provide infinitely many solutions $ \pare{\tilde{\phi}_k}_{k\geqslant 1} $ to the above system of the form
\begin{align*}
\tilde{\phi}_{k}\pare{x, t}=\sum_{n =0}^\infty \frac{f^{\pare{n}}_{k}\pare{t}}{\pare{2 n}!} \ x^{2n}, && 
f_k \pare{t}= e^{-t^{-2k}}. 
\end{align*}
Moreover 
\begin{equation*}
\tilde{\phi}_k, \pare{\tilde{\phi}_k}_x \xrightarrow{t \searrow 0}0, \text{ in } \cD' \pare{\bR}, 
\end{equation*}
whence setting $ \phi_k = 1+\tilde{\phi}_k $ and $ u_k = -2\phi_k^{-1} \pare{\phi_k}_x $ we deduce that for each $ k\geqslant 1 $ $ u_k $ solves \eqref{eq:Burgers1D} in the sense of distributions and $ u_k\pare{\cdot , t}\xrightarrow{t\searrow 0} 0 $ in $ \cD'\pare{\bR} $, concluding. 
 \end{proof}

\footnotesize{
\providecommand{\bysame}{\leavevmode\hbox to3em{\hrulefill}\thinspace}
\providecommand{\MR}{\relax\ifhmode\unskip\space\fi MR }
\providecommand{\MRhref}[2]{%
  \href{http://www.ams.org/mathscinet-getitem?mr=#1}{#2}
}
\providecommand{\href}[2]{#2}

}

\end{document}